\newtheorem{thm}{Theorem}[]
\newtheorem{lem}[thm]{Lemma}
\newtheorem{prop}[thm]{Prop.}
\newtheorem{rem}[thm]{Remark}
\begin{document}
\begin{frontmatter}

\title{Continuous and discrete inf-sup conditions for surface incompressibility
of a deformable continuum}

\author[]{Gustavo C. Buscaglia}
\ead{gustavo.buscaglia@icmc.usp.br}
\address{Instituto de Ci\^encias Matem\'aticas e de Computa\c c\~ao, 
Universidade de S\~ao Paulo, Av. do Trabalhador s\~ao-carlense, 400,  
13560-970 S\~ao Carlos, SP, Brazil}

\begin{keyword}
Inextensibility, Surface incompressibility,
Volume preservation, Inf-sup condition, Surface finite elements, Stabilization.
\end{keyword}

\begin{abstract}
Surface incompressibility, also called inextensibility, imposes a zero-surface-divergence constraint on the velocity of a closed deformable material surface. The well-posedness of the mechanical problem under such constraint depends on an inf-sup or stability condition for which an elementary proof is provided. The result is also shown to hold in combination with the additional constraint of preserving the enclosed volume, or isochoricity. These continuous results are then applied to prove a modified discrete inf-sup condition that is crucial for the convergence of stabilized finite element methods.
\end{abstract}

\end{frontmatter}

\linenumbers


\section{Introduction}

The rate of change of the volume $|\omega|$
of an infinitesimal piece of continuum $\omega$ located at point
${\bf x}$ at time $t$ is given by \cite{gurtin81}
\begin{equation}
\frac{d|\omega|}{dt}\,=\,|\omega|\,\nabla \cdot {\bf u} ({\bf x},t).
\end{equation}
There exist materials that preserve volume exactly, or within
experimental accuracy, and are modeled as {\em incompressible}.
%
The incompressibility constraint ($\nabla\cdot {\bf u}=0$) materializes in
the equations of motion as a reaction force, which is
the gradient of an unknown pressure field $p$. The
pressure field is a uniquely determined element of $Q=L^2(\Omega)$ because
the divergence operator $\nabla \cdot$ is {\em surjective} onto $Q$.
The surjectivity is equivalent to the inf-sup condition\cite{girault_raviart,brezzifortin}
\begin{equation}
\inf_{q\in Q}~\sup_{{\bf v}\in {\bf H}^1(\Omega)}~
\frac{\int_\Omega q\,\nabla\cdot {\bf v}}{\|{\bf v}\|_1\,\|q\|_0}
~>~0~.
\label{infsupvol}
\end{equation}
If the continuum is restricted at the boundary, more precisely,
if the velocity field normal to the boundary is constrained
all over $\partial\Omega$, then the mean value of the pressure
is undetermined. This reflects mathematically in that
to satisfy (\ref{infsupvol}) the pressure
space $Q$ must be chosen as a strict subspace of $L^2(\Omega)$,
for example $L^2_0(\Omega)$ which consists of functions with zero mean.

If we now consider a {\em smooth, closed, orientable surface} $\Gamma$ 
evolving in $\mathbb{R}^3$,
the analogous property to volume preservation is {\em area preservation},
also called {\em inextensibility} or {\em surface incompressibility},
which is indeed exhibited by many relevant materials. Our main interest
is in lipid membranes \cite{arroyo2,tb13_cmame,ramb15_jcp,bgn15_ta}, 
which are area-preserving surface fluids \cite{seifert97},
but the inextensibility constraint can also hold in other,
fluid or solid, material surfaces.

The rate of
change of the area $|\varpi|$ of an infinitesimal piece of surface $\varpi$ 
located at ${\bf x}$ at time $t$ and moving along a velocity field ${\bf u}$ 
is given by
\begin{equation}
\frac{d|\varpi|}{dt} = |\varpi|\,\nabla_\Gamma\cdot {\bf u} ({\bf x},t)
\end{equation}
where we have introduced the 
{\em surface divergence} operator $\nabla_\Gamma \cdot$. The velocity
field of an inextensible surface $\Gamma$ must thus satisfy
\begin{equation}
\nabla_\Gamma \cdot {\bf u} ({\bf x},t) = 0 \qquad 
\mbox{a.e. in}~\Gamma~.
\end{equation}

\begin{rem}
We adopt here the operators of tangential calculus
as presented by Delfour and Zol\'esio \cite{delfourzolesio11},
with the specific notation of Buscaglia and Ausas \cite{ba11_cmame}.
In particular, the surface gradient $\nabla_\Gamma f$ of a function
$f:\Gamma\to\mathbb{R}$ at the point ${\bf x}\,\in\,\Gamma\subset
\mathbb{R}^3$ is the three-dimensional vector
\begin{equation}
\nabla_\Gamma f\,({\bf x}) \doteq \nabla \widehat{f} ({\bf x})~,
\end{equation}
where 
\begin{equation}
\widehat{f}({\bf x})\doteq f(\Pi_\Gamma {\bf x})~,
\end{equation}
$\Pi_\Gamma {\bf x}$ being the normal projection of ${\bf x}\,\in\,\mathbb{R}^3$
onto $\Gamma$. Surface gradients of vector fields are computed
one Cartesian component at a time ($(\nabla_\Gamma {\bf u})_{ij}=(\nabla \widehat{u}_i)_j$).
\end{rem}


If the velocity of the surface ${\bf u}\,\in\,{\bf V}$ satisfies a variational
formulation
\begin{equation}
a({\bf u},{\bf v})=f({\bf v}) \qquad \forall\,{\bf v}\,\in\,{\bf V}~,
\end{equation}
for some continuous bilinear and linear forms $a(\cdot,\cdot)$ and
$f(\cdot)$, then the {\em mixed} formulation that enforces
the inextensibility constraint is: 

\begin{framed}
\noindent
{\bf Mixed inextensible formulation:} {\em Find $({\bf u},\sigma)\,
\in\,{\bf V}\times \Sigma$ such that
\begin{eqnarray}
a({\bf u},{\bf v}) + b({\bf v},\sigma) &=& f({\bf v})\qquad
\forall\, {\bf v}\,\in\,{\bf V} \label{eqmif1}\\
b({\bf u},\xi)&=&0\qquad\qquad\forall\,\xi\,\in\,\Sigma~, \label{eqmif2}
\end{eqnarray}
where
\begin{equation}
b({\bf v},\xi) \doteq \int_\Gamma \xi\,\nabla_\Gamma \cdot {\bf v}~.
\end{equation}
}
\end{framed}

The Lagrange multiplier $\sigma$ is the reaction to the inextensibility
constraint, a scalar field that is known as {\em surface tension}.
It is well known \cite{brezzifortin,ErnGuermond04} that a necessary condition for 
(\ref{eqmif1})-(\ref{eqmif2}) to be well posed, and in particular for
$\sigma$ to exist and be unique, is the inf-sup condition
\begin{equation}
\inf_{0\neq \xi\,\in\,\Sigma}~\sup_{0\neq {\bf v}\,\in\,{\bf V}}~
\frac{b({\bf v},\xi)}{\|{\bf v}\|_{\bf X} \|\xi\|_\Sigma} \doteq \alpha >0~.
\label{eqinfsup1}
\end{equation}
We assume that the velocity space ${\bf V}$ is a subspace of
\begin{equation}
{\bf X}=\{{\bf v}\,\in\,{\bf H}^1(\Gamma)~|~v_n\doteq {\bf v}\cdot 
\widecheck{\bf n} \,\in\,H^m(\Gamma) \}~,
\end{equation}
$\widecheck{\bf n}$ being the unit normal, with two possibilities:
\begin{itemize}
\item If $m=1$, we have ${\bf X}={\bf H}^1(\Gamma)$ and we set
\begin{equation}
\|{\bf v}\|_{\bf X} = \|{\bf v}\|_0 + \ell\,|{\bf v}|_1~.
\end{equation}
\item If $m>1$ the space ${\bf X}$ has extra regularity in the
normal component, and the norm is chosen as
\begin{equation}
\|{\bf v}\|_{\bf X} = \|{\bf v}\|_0 + \ell\,|{\bf v}|_1 + \ell^m\,
|v_n|_m~.
\end{equation}
\end{itemize}
Above, $\|\cdot\|_m$ (respectively, $|\cdot|_m$) denotes the usual
norm (respectively, seminorm) on $H^m(\Gamma)$. The same
notation is used, without risk of confusion, for the norm and
seminorm of ${\bf H}^m(\Gamma)$ (space of vector fields with components
in $H^m(\Gamma)$). The length scale $\ell$ is
a constant introduced to make the units consistent. The added regularity
in the normal component results, in the case of lipid membrane models, from
a curvature dependent energy \cite{canham,helfrich,deuling_helfrich,seifert97,sf14_jmb}. Our interest in this article lies in identifying 
appropriate combinations of spaces ${\bf V}\subset {\bf X}$ and $\Sigma$
so that (\ref{eqinfsup1}) is satisfied.

Notice that $b({\bf v},\sigma)$ in (\ref{eqmif1}) is the dynamical
action of $\sigma$. Using the integration by parts formula for closed
surfaces \cite{delfourzolesio11,ba11_cmame,de13_an}
\begin{equation}
\int_\Gamma \sigma\, \nabla \cdot {\bf v}
= \int_\Gamma {\bf v}\cdot  \left (-\nabla_\Gamma \sigma + H\,\sigma\,\widecheck{\bf n}
\right )\qquad\qquad\forall\,{\bf v}\,\in\,{\bf X},\qquad \forall\,\sigma\,
\in\,H^1(\Gamma)~,
\label{eqgauss}
\end{equation}
where $H=\nabla_\Gamma\cdot\widecheck{\bf n}$ is the mean curvature,
one recovers the classical expression of the surface tension force
\begin{equation}
{\bf f}_\sigma = \nabla_\Gamma \sigma - H\,\sigma\,\widecheck{\bf n}~.
\end{equation}

The inextensibility constraint is frequently imposed 
together with the constraint of {\em isochoricity},
that is, of {\em preserving the enclosed volume}.
Many physical situations admit such an idealization,
as for example the situation in which an impermeable material 
surface encloses an incompressible medium. In the case of
lipid membranes, isochoricity is a consequence of osmotic
equilibrium \cite{seifert97}.

The rate of change of the enclosed volume
$\mathcal{V}$ when $\Gamma$ moves along the velocity field ${\bf u}$ is
\begin{equation}
\frac{d\mathcal{V}}{dt}=\int_\Gamma {\bf u}\cdot\widecheck{\bf n}~.
\end{equation}
The Lagrange multiplier that enforces this constraint
turns out to be an {\em internal uniform
pressure} $p\,\in\,\mathbb{R}$
acting as a uniform normal
force $p\,\widecheck{\bf n}({\bf x})$ at each ${\bf x}\,\in\,\Gamma$. 

If just the isochoricity constraint is imposed,
the corresponding inf-sup condition is
\begin{equation}
\inf_{r\,\in\,\mathbb{R}}~\sup_{{\bf v}\in {\bf V}}~
\frac{r\,\int_\Gamma {\bf v}\cdot\widecheck{\bf n}}
{|r|\,\|{\bf v}\|_{\bf X}} ~>~0~.
\label{infsupp}
\end{equation}

If both the inextensibility and isochoricity constraints 
hold simultaneously, the mixed formulation becomes:

\begin{framed}
\noindent
{\bf Mixed inextensible-isochoric formulation:} 
{\em Find $({\bf u},\sigma,p)\,
\in\,{\bf V}\times \Sigma\times \mathbb{R}$ such that
\begin{eqnarray}
a({\bf u},{\bf v}) + c({\bf v},(\sigma,p)) &=& f({\bf v})\qquad
\forall\, {\bf v}\,\in\,{\bf V} \label{eqmiif1}\\
c({\bf u},(\xi,q))&=&0\qquad\qquad\forall\,(\xi,q)\,\in\,\Sigma\times\mathbb{R}~, 
\label{eqmiif2}
\end{eqnarray}
where
\begin{equation}
c({\bf v},(\xi,q)) \doteq \int_\Gamma \left ( 
\xi\,\nabla_\Gamma \cdot {\bf v} + q\,{\bf v}\cdot \widecheck{\bf n}
\right )
~.
\end{equation}
}
\end{framed}

In this case {\em both} its surface tension $\sigma$ 
(as a function of ${\bf x}\,\in\,\Gamma$)
and its internal pressure $p$ are uniquely defined in $\Sigma\times \mathbb{R}$,
under the following condition on ${\bf V}$-$\Sigma$,
\begin{equation}
\inf_{(\xi,q)\in \Sigma\times \mathbb{R}}~\sup_{{\bf v}\in {\bf V}}~
\frac{c({\bf v},(\xi,q))}{\|{\bf v}\|_{\bf X}\,\|(\xi,q)\|_{\Sigma\times \mathbb{R}}}
~=~\inf_{(\xi,q)\in \Sigma\times \mathbb{R}}~\sup_{{\bf v}\in {\bf V}}~
\frac{\int_\Gamma \left ( \xi\,\nabla_\Gamma \cdot {\bf v} +
\,q\,{\bf v}\cdot\widecheck{\bf n} \right )
}{\|{\bf v}\|_{\bf X}\,\|(\xi,q)\|_{\Sigma\times \mathbb{R}}}
~\doteq~\beta~>~0~.
\label{infsupqp}
\end{equation}

In what follows, we will select appropriate spaces $\Sigma$ and
prove the stability inequalities
(\ref{eqinfsup1}) and (\ref{infsupqp}) considering
two possibilities for ${\bf V}$: (a) the shape $\Gamma$ is fixed, implying
that ${\bf V}\subsetneq {\bf X}$ consists solely of tangential fields; and (b) the space ${\bf V}$ is unconstrained, i.e., ${\bf V}={\bf X}$.

\section{Inf-sup conditions for purely tangential motions}

If a vector field ${\bf v}\,\in\,{\bf X}$ is decomposed into its 
tangential component ${\bf v}_\tau$ and its normal part $v_n\,\widecheck{\bf n}$,
i.e.,
\begin{equation}
{\bf v}={\bf v}_\tau + v_n\,\widecheck{\bf n}~,
\label{eqdecomposition}
\end{equation}
its gradient takes the form
\begin{equation}
\nabla_\Gamma {\bf v} = \nabla_\Gamma {\bf v}_\tau + 
\widecheck{\bf n}\otimes \nabla_\Gamma v_n + v_n \nabla_\Gamma \widecheck{\bf n}
\label{eqgradient}
\end{equation}
and its surface divergence is given by
\begin{equation}
\nabla_\Gamma\cdot {\bf v} = \nabla_\Gamma\cdot {\bf v}_\tau
+ v_n\,\nabla_\Gamma \cdot \widecheck{\bf n}~.
\end{equation}
The last term in (\ref{eqgradient}) 
contains the {\em curvature tensor} 
\begin{equation}
{\bf H}=\nabla_\Gamma\widecheck{\bf n}~.
\end{equation}

If ${\bf V}$ only consists of tangential motions, then $v_n=0$
for all ${\bf v}\,\in\,{\bf V}$, the numerator inside the inf-sup in (\ref{infsupp})
vanishes identically and thus the condition is not satisfied. The internal
pressure is not uniquely defined, and modifying it has no effect on the
motion of the surface continuum.

The surface tension $\sigma$, on the other hand, is well defined in
$L^2(\Gamma)$ {\em up to an arbitrary additive constant}. It is uniquely
defined, for example, in
\begin{equation}
L^2_0(\Gamma)=\{q\,\in\,L^2(\Gamma)~|~\int_\Gamma q\,=\,0\}~.
\end{equation}

\begin{prop} If ${\bf V}$ is the closed subspace of ${\bf X}$
consisting of purely tangential motions, i.e.,
\begin{equation}
{\bf V} = \{{\bf v}\,\in\,{\bf X}~|~{\bf v}\cdot {\bf n}=0~~\mbox{a.e. in}~\Gamma
\}
\end{equation}
and $\Sigma=L^2_0(\Gamma)$, then the inf-sup condition (\ref{eqinfsup1}) holds
with
\begin{equation}
\alpha = \frac{1}{c_r\,\ell}
\end{equation}
where $c_r$ is the elliptic regularity constant (see (\ref{eqregularity}) below). 
\end{prop}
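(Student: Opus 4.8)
The plan is to verify the inf-sup condition by the standard recipe for surjectivity: for each nonzero $\xi\in\Sigma=L^2_0(\Gamma)$ I would exhibit a single explicit tangential test field ${\bf v}\in{\bf V}$ for which the quotient $b({\bf v},\xi)/(\|{\bf v}\|_{\bf X}\,\|\xi\|_0)$ is bounded below by $1/(c_r\ell)$, uniformly in $\xi$. The supremum over ${\bf V}$ then inherits this bound, and since $\xi$ is arbitrary so does the infimum over $\Sigma$.

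The construction rests on the Laplace--Beltrami operator. Since $\Gamma$ is smooth, closed and orientable and $\int_\Gamma\xi=0$, the problem $\Delta_\Gamma\phi=\xi$ with $\int_\Gamma\phi=0$ has a unique solution $\phi\in H^2(\Gamma)$ (by Lax--Milgram, or the Fredholm alternative, on the zero-mean subspace of $H^1(\Gamma)$; the compatibility condition is exactly $\xi\in L^2_0(\Gamma)$). I then set ${\bf v}=\nabla_\Gamma\phi$. Two facts of tangential calculus make this field both admissible and convenient: the surface gradient of a scalar is everywhere tangent to $\Gamma$, so $v_n=0$ and ${\bf v}\in{\bf V}$; and its surface divergence reproduces the source, $\nabla_\Gamma\cdot{\bf v}=\nabla_\Gamma\cdot\nabla_\Gamma\phi=\Delta_\Gamma\phi=\xi$. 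Hence $b({\bf v},\xi)=\int_\Gamma\xi\,\nabla_\Gamma\cdot{\bf v}=\|\xi\|_0^2$.

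It remains to control the denominator. Because ${\bf v}\in{\bf V}$ is tangential, its norm collapses to $\|{\bf v}\|_{\bf X}=\|{\bf v}\|_0+\ell\,|{\bf v}|_1=\|\nabla_\Gamma\phi\|_0+\ell\,|\nabla_\Gamma\phi|_1$ (the $|v_n|_m$ term is absent even when $m>1$). This is precisely the quantity governed by $H^2$ elliptic regularity for $\Delta_\Gamma$ on the compact surface $\Gamma$: invoking (\ref{eqregularity}), which I take in the dimensionally consistent form $\|\nabla_\Gamma\phi\|_0+\ell\,|\nabla_\Gamma\phi|_1\le c_r\,\ell\,\|\Delta_\Gamma\phi\|_0=c_r\,\ell\,\|\xi\|_0$, yields $\|{\bf v}\|_{\bf X}\le c_r\,\ell\,\|\xi\|_0$. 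Combining the two estimates,
\begin{equation}
\sup_{0\neq{\bf w}\in{\bf V}}\frac{b({\bf w},\xi)}{\|{\bf w}\|_{\bf X}\,\|\xi\|_0}\ \ge\ \frac{b({\bf v},\xi)}{\|{\bf v}\|_{\bf X}\,\|\xi\|_0}\ =\ \frac{\|\xi\|_0}{\|{\bf v}\|_{\bf X}}\ \ge\ \frac{1}{c_r\,\ell}~,
\end{equation}
and taking the infimum over $0\neq\xi\in\Sigma$ gives $\alpha\ge 1/(c_r\ell)>0$.

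The main obstacle, and indeed the only nontrivial analytic ingredient, is the elliptic regularity step: one must know that the Laplace--Beltrami solution operator maps $L^2_0(\Gamma)$ boundedly into $H^2(\Gamma)$ with a controlled constant $c_r$, so that the full ${\bf X}$-norm of ${\bf v}=\nabla_\Gamma\phi$, including the second-derivative seminorm $|\nabla_\Gamma\phi|_1$, is tamed by $\|\xi\|_0$ alone. Everything else---solvability under the zero-mean compatibility condition, tangency of $\nabla_\Gamma\phi$, and the divergence identity---is routine on a closed surface. The stated value of $\alpha$ should be read as the lower bound delivered by this optimal gradient test field, with $c_r$ the best constant in (\ref{eqregularity}).
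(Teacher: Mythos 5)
Your proof is correct and follows essentially the same route as the paper: solve $\Delta_\Gamma\varphi=\xi$ on the zero-mean subspace, take the tangential test field ${\bf v}=\nabla_\Gamma\varphi$ so that $b({\bf v},\xi)=\|\xi\|_0^2$, and bound $\|{\bf v}\|_{\bf X}\le c_r\,\ell\,\|\xi\|_0$ via the elliptic regularity estimate (\ref{eqregularity}). The only cosmetic difference is that you state the regularity bound directly for $\nabla_\Gamma\varphi$ rather than for $\varphi$ itself, which is an equivalent rescaling of (\ref{eqregularity}).
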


\begin{proof} This is proved in the same way as (\ref{infsupvol}) is proved.
Given $\xi$ arbitrary in $L^2_0(\Gamma)$, let 
$\varphi\,\in\,H^1(\Gamma)\cap L^2_0(\Gamma)$ be the solution of
\begin{equation}
\Delta_\Gamma \varphi = \xi~.
\end{equation}
Taking ${\bf v}=\nabla_\Gamma \varphi$, one has 
\begin{equation}
b({\bf v},\xi)=
\int_\Gamma \xi\,\nabla_\Gamma\cdot {\bf v} = \|\xi\|_0^2~.
\end{equation}
On the other hand, from the regularity estimate \cite{de13_an}
\begin{equation}
\|\varphi\|_0+\ell\,|\varphi|_1+\ell^2\,|\varphi|_2 \leq c_r\,\ell^2\,\|\xi\|_0~
\label{eqregularity}
\end{equation}
(where again $\ell$ is introduced to render the units consistent)
one has that 
$\|{\bf v}\|_{\bf X}=\|{\bf v}\|_0+\ell\,|{\bf v}|_1
=|\varphi|_1 + \ell\,|\varphi|_2 \leq c_r\,\ell\,\|\xi\|_0$, 
and thus the claim (\ref{eqinfsup1}) is proved.
\end{proof}

\section{The inf-sup condition for arbitrary motions}

If a surface can move along its normal direction
 then condition (\ref{infsupp}) is seen to hold simply taking 
${\bf v}=\widecheck{\bf n}$. The preservation of the enclosed volume uniquely
defines an internal pressure. 

Turning to the inextensibility condition, one has the following:

\begin{prop}
If ${\bf V}={\bf X}$ and $\Sigma=L^2(\Gamma)$, then (\ref{eqinfsup1})
holds.
\label{prop2}
\end{prop}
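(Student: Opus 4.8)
The plan is to proceed exactly as for (\ref{infsupvol}) and for the preceding proposition: for each $\xi$ I will exhibit a single test field ${\bf v}\in{\bf X}$ satisfying $\nabla_\Gamma\cdot{\bf v}=\xi$ together with a bound $\|{\bf v}\|_{\bf X}\le C\,\|\xi\|_0$, so that $b({\bf v},\xi)=\|\xi\|_0^2$ immediately yields (\ref{eqinfsup1}) with $\alpha\ge 1/C$. The one genuinely new feature, compared with the tangential case, is that ${\bf V}={\bf X}$ now contains normal motions, and these are exactly what is needed to reproduce the \emph{nonzero-mean} part of $\xi$: setting $\sigma=1$ in (\ref{eqgauss}) shows that $\int_\Gamma\nabla_\Gamma\cdot{\bf v}=\int_\Gamma H\,v_n$, so a tangential field alone always produces a divergence of zero mean, whereas a normal displacement $v_n\,\widecheck{\bf n}$ contributes $\int_\Gamma H\,v_n$.

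First I would absorb the mean of $\xi$ with a normal motion. Choosing $v_n=\lambda\,H$ with $\lambda=\bigl(\int_\Gamma\xi\bigr)/\|H\|_0^2$ gives $\int_\Gamma H\,v_n=\lambda\,\|H\|_0^2=\int_\Gamma\xi$, so the remainder $\xi_0\doteq\xi-H\,v_n$ lies in $L^2_0(\Gamma)$. Next I would treat $\xi_0$ exactly as in the preceding proposition: let $\varphi\in H^1(\Gamma)\cap L^2_0(\Gamma)$ solve $\Delta_\Gamma\varphi=\xi_0$ (which is solvable precisely because $\int_\Gamma\xi_0=0$) and set ${\bf v}_\tau=\nabla_\Gamma\varphi$, a purely tangential field with $\nabla_\Gamma\cdot{\bf v}_\tau=\Delta_\Gamma\varphi=\xi_0$. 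Then ${\bf v}={\bf v}_\tau+v_n\,\widecheck{\bf n}$ satisfies $\nabla_\Gamma\cdot{\bf v}=\xi_0+H\,v_n=\xi$, whence $b({\bf v},\xi)=\|\xi\|_0^2$.

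It then remains to bound $\|{\bf v}\|_{\bf X}$. By Cauchy--Schwarz $|\lambda|\le |\Gamma|^{1/2}\|\xi\|_0/\|H\|_0^2$, so the normal part $v_n=\lambda H$ is controlled in all the norms entering $\|\cdot\|_{\bf X}$ — here the smoothness of $\Gamma$ guarantees $H\in H^m(\Gamma)$ and $H^2\in L^2(\Gamma)$, so that $\|\xi_0\|_0\le\|\xi\|_0+|\lambda|\,\|H^2\|_0\le C\,\|\xi\|_0$. The regularity estimate (\ref{eqregularity}) then bounds the tangential part, $\|{\bf v}_\tau\|_{\bf X}\le c_r\,\ell\,\|\xi_0\|_0$, and since ${\bf v}_\tau=\nabla_\Gamma\varphi$ carries no normal component the full $\|\cdot\|_{\bf X}$-norm of ${\bf v}$ is obtained by adding the two contributions, giving $\|{\bf v}\|_{\bf X}\le C\,\|\xi\|_0$ with $C$ depending only on $\Gamma$, $\ell$ and $c_r$.

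The step I expect to be the crux — and the one point that genuinely goes beyond the preceding proposition — is the assertion $\|H\|_0>0$, which makes $\lambda$ well defined and is what allows the mean of $\xi$ to be reached at all. This is the geometric fact that a smooth closed surface in $\mathbb{R}^3$ cannot be minimal, i.e.\ $H\not\equiv 0$; it is precisely the property whose absence in the purely tangential setting forced the choice $\Sigma=L^2_0(\Gamma)$ there. The remaining estimates, relying only on the smoothness of $\Gamma$ and on (\ref{eqregularity}), are routine.
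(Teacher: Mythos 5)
Your proof is correct, but it takes a genuinely different route from the paper's. The paper proves Prop.~\ref{prop2} as a by-product of the proof of Prop.~\ref{prop3}: it reuses the test field (\ref{eq22}) with $q=0$, whose normal component $v_n=k_1\widebar{H}\,\overline{\xi}+k_2(H-\widebar{H})\,\overline{\xi}$ does not reproduce $\xi$ exactly but, after Young's inequality, yields the lower bound (\ref{eq41}) with a constant proportional to $k_1\widebar{H}^2+k_2\|H-\widebar{H}\|_0^2/|\Gamma|$. You instead construct an exact right inverse of the surface divergence: the normal motion $v_n=\lambda H$ with $\lambda=(\int_\Gamma\xi)/\|H\|_0^2$ absorbs the mean of $\xi$, the tangential part $\nabla_\Gamma\varphi$ handles the zero-mean remainder, and $\nabla_\Gamma\cdot{\bf v}=\xi$ gives $b({\bf v},\xi)=\|\xi\|_0^2$ with no Young-type manipulation. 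The two constructions are close relatives --- your $v_n$ is the paper's with $k_1=k_2=|\Gamma|/\|H\|_0^2$ --- but yours is self-contained and makes explicit the geometric fact both arguments rest on, namely that a closed surface in $\mathbb{R}^3$ is never minimal, so $\|H\|_0>0$; in the paper this appears only implicitly in the observation that $C>0$ even when $H=\widebar{H}$. What the paper's detour buys is uniformity: one test field serves both the inextensible and the inextensible--isochoric cases, and the computation isolates exactly where the spherical degeneracy of Prop.~\ref{prop3} originates. One small caveat for your write-up: ``adding the two contributions'' to obtain $\|{\bf v}\|_{\bf X}$ hides the curvature cross term $v_n{\bf H}$ arising in $\nabla_\Gamma(v_n\widecheck{\bf n})$; the precise statement is the paper's estimate (\ref{eqlemma2}), which costs only a factor $(1+\ell\,\|{\bf H}\|_{L^\infty(\Gamma)})$ and does not affect your conclusion.
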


This means that the surface tension is {\em completely} defined on an
inextensible surface that can move freely in space. We will prove
this proposition after proving Prop. \ref{prop3}, since by then
it will become straightforward.

Consider now the case of a surface that {\em both} is inextensible
{\em and} preserves the enclosed volume. Rewriting the corresponding
inf-sup condition (\ref{infsupqp}) as
\begin{equation}
\inf_{(\xi,q)\in \Sigma\times\mathbb{R}}~\sup_{{\bf v}\in {\bf V}}~
\frac{\int_\Gamma \left [\xi\,\nabla_\Gamma \cdot {\bf v}_\tau + (H\xi+q)\,v_n\right ]}
{\|{\bf v}\|_{\bf X}\,\|(\xi,q)\|_{Q\times \mathbb{R}}}
~=~\beta~>~0~
\label{infsupq2}
\end{equation}
one immediately sees that these two conditions {\em are linearly dependent
if the surface has constant mean curvature} (a sphere). Taking $q=H$ (constant)
and $\xi=-1$ (constant), and remembering that {\em the integral of the surface
divergence of a purely tangential field over a closed surface vanishes}
(a consequence of (\ref{eqgauss}) taking $\sigma=1$), 
one has the numerator inside the inf-sup equal
to zero for all ${\bf v}$. For a sphere, thus, inextensibility {\em implies}
the preservation of the enclosed volume. The internal pressure and the
mean surface tension are not uniquely defined. The gradient of the surface
tension, on the other hand, is well determined. 

For all surfaces other than spheres, however, one has:

\begin{prop}
If the surface $\Gamma$ is not a sphere, then (\ref{infsupqp}) holds
with ${\bf V}={\bf X}$ and $\Sigma=L^2(\Gamma)$.
\label{prop3}
\end{prop}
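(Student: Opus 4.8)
The plan is to split the two degrees of freedom of the test velocity via the decomposition (\ref{eqdecomposition}): use the tangential part ${\bf v}_\tau$ to control the mean-zero part of $\xi$, exactly as in the purely-tangential case, and reserve the normal part $v_n$, chosen in the two-dimensional space $\mbox{span}\{1,H\}$, to control simultaneously the mean of $\xi$ and the scalar $q$. Writing $\bar\xi=|\Gamma|^{-1}\int_\Gamma\xi$ and $\xi_0=\xi-\bar\xi\in L^2_0(\Gamma)$, and using the rewritten form (\ref{infsupq2}), I would first set ${\bf v}_\tau=\lambda\,\nabla_\Gamma\varphi$ with $\Delta_\Gamma\varphi=\xi_0$ and $\lambda>0$ a scaling parameter to be fixed later. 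Just as in the tangential case this gives
\begin{equation}
\int_\Gamma \xi\,\nabla_\Gamma\cdot{\bf v}_\tau=\lambda\,\|\xi_0\|_0^2,
\qquad
\|{\bf v}_\tau\|_{\bf X}\le \lambda\,c_r\,\ell\,\|\xi_0\|_0,
\end{equation}
the bound coming from the regularity estimate (\ref{eqregularity}).

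The crux is the choice of the normal component, and it is where the hypothesis enters. Set $h_0=|\Gamma|$, $h_1=\int_\Gamma H$ and $h_2=\int_\Gamma H^2$, and look for $v_n=\alpha+\beta\,H$ solving
\begin{equation}
\begin{pmatrix} h_1 & h_2\\ h_0 & h_1\end{pmatrix}
\begin{pmatrix}\alpha\\ \beta\end{pmatrix}
=\begin{pmatrix}\bar\xi\\ q\end{pmatrix},
\end{equation}
so that $\int_\Gamma H\,v_n=\bar\xi$ and $\int_\Gamma v_n=q$, and hence $\bar\xi\int_\Gamma H\,v_n+q\int_\Gamma v_n=\bar\xi^2+q^2$. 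This system is solvable precisely when its determinant $h_1^2-h_0h_2$ is nonzero. By the Cauchy--Schwarz inequality $\big(\int_\Gamma H\big)^2\le|\Gamma|\int_\Gamma H^2$, with equality \emph{if and only if} $H$ is constant; thus the determinant is nonzero exactly when the mean curvature is non-constant, i.e.\ (by Alexandrov's theorem for closed embedded surfaces, the content of the remark that constant mean curvature forces a sphere) when $\Gamma$ is not a sphere. I expect this single algebraic non-degeneracy to be the main obstacle; once it holds, $(\alpha,\beta)$ depends linearly and boundedly on $(\bar\xi,q)$, and since $\Gamma$ is smooth $H\in H^m(\Gamma)$, so $v_n\in H^m(\Gamma)$ with $\|v_n\|_m\le C_\Gamma\,(\bar\xi^2+q^2)^{1/2}$ and ${\bf v}=\,{\bf v}_\tau+v_n\,\widecheck{\bf n}\in{\bf X}$.

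It then remains to assemble the estimate and absorb the coupling between the two parts. With this ${\bf v}$ one computes
\begin{equation}
c({\bf v},(\xi,q))=\lambda\,\|\xi_0\|_0^2+\big(\bar\xi^2+q^2\big)+\int_\Gamma H\,\xi_0\,v_n,
\end{equation}
where the cross term is controlled by $\big|\int_\Gamma H\,\xi_0\,v_n\big|\le\|H\|_\infty\,\|\xi_0\|_0\,\|v_n\|_0\le C\,\|\xi_0\|_0\,(\bar\xi^2+q^2)^{1/2}$. Abbreviating $s=\|\xi_0\|_0$ and $t=(\bar\xi^2+q^2)^{1/2}$, this reads $c\ge \lambda s^2+t^2-Cst$, and by Young's inequality a sufficiently large $\lambda$ (depending only on $\Gamma$ and $\ell$) yields $c\ge\gamma\,(s^2+t^2)$ for some $\gamma>0$. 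Since $\|{\bf v}\|_{\bf X}\le C'(s+t)$ and $\|(\xi,q)\|_{\Sigma\times\mathbb{R}}^2=s^2+|\Gamma|\bar\xi^2+q^2$ is equivalent to $s^2+t^2$, one concludes
\begin{equation}
\sup_{{\bf v}\in{\bf X}}\frac{c({\bf v},(\xi,q))}{\|{\bf v}\|_{\bf X}}
\ge\frac{\gamma\,(s^2+t^2)}{C'(s+t)}
\ge\beta\,\|(\xi,q)\|_{\Sigma\times\mathbb{R}}
\end{equation}
with $\beta>0$ explicit, which is (\ref{infsupqp}). The verification that ${\bf v}\in{\bf X}$ and the tracking of the constants are routine; the whole argument hinges on the invertibility in the second step.
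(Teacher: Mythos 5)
Your proposal is correct and follows essentially the same route as the paper: decompose ${\bf v}$ as in (\ref{eqdecomposition}), take the tangential part as $\nabla_\Gamma\varphi$ with $\Delta_\Gamma\varphi=\xi-\overline{\xi}$ controlled by the elliptic regularity estimate (\ref{eqregularity}), choose the normal part in $\mbox{span}\{1,H\}$, and absorb the cross terms by Young's inequality. Your $2\times 2$ invertibility condition is the same non-sphericity quantity the paper uses, since $h_1^2-h_0h_2=-|\Gamma|\,\|H-\widebar{H}\|_0^2$; the only (cosmetic) difference is that you solve for the coefficients exactly and enlarge the tangential scaling $\lambda$, whereas the paper fixes the tangential part and shrinks the normal coefficients $k_1,k_2$.
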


The proof is quite straightforward, but let us for clarity first
state a couple of preliminary facts. Let $\overline{g}$ denote
the mean value of $g\,\in\,L^1(\Gamma)$.

\begin{lem}
The expression
\begin{equation}
\|(\xi,q)\|_{L^2(\Gamma)\times \mathbb{R}} =
\left [
\|\xi-\overline{\xi}\|_0^2 
+ \ell^4\,(\widebar{H}\, \overline{\xi}+q)^2
+ \ell^2\,\overline{{\xi}}^2
\right ]^{\frac12}
\end{equation}
is indeed a norm on $L^2(\Gamma)\times \mathbb{R}$. $\Box$
\end{lem}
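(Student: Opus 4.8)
The plan is to recognize the right-hand side as the norm induced on $L^2(\Gamma)\times\mathbb{R}$ by pulling back a standard product Hilbert-space inner product through an explicit injective linear map, which reduces all the norm axioms to a one-line kernel computation. Concretely, I would use the $L^2(\Gamma)$-orthogonal splitting $\xi=(\xi-\overline{\xi})+\overline{\xi}$, writing any $\xi\in L^2(\Gamma)$ as its mean-free part $\xi-\overline{\xi}\in L^2_0(\Gamma)$ plus its mean $\overline{\xi}\in\mathbb{R}$. This identifies $L^2(\Gamma)\times\mathbb{R}$ with $L^2_0(\Gamma)\times\mathbb{R}\times\mathbb{R}$ and suggests introducing the linear map
\[
T(\xi,q)=\bigl(\xi-\overline{\xi},\ \ell\,\overline{\xi},\ \ell^2(\widebar{H}\,\overline{\xi}+q)\bigr)\in L^2_0(\Gamma)\times\mathbb{R}\times\mathbb{R}.
\]
Equipping the target with its natural product inner product — the $L^2$ inner product on the first factor and the Euclidean one on $\mathbb{R}^2$ — one checks immediately that the candidate expression equals $\|T(\xi,q)\|$ of that product space, since the three summands are exactly $\|\xi-\overline{\xi}\|_0^2$, $(\ell\,\overline{\xi})^2$ and $(\ell^2(\widebar{H}\,\overline{\xi}+q))^2$.

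With this identification the result is essentially automatic. Since the expression has the form $\sqrt{\langle T(\xi,q),T(\xi,q)\rangle}$, the bilinear form $\langle(\xi,q),(\xi',q')\rangle\doteq\langle T(\xi,q),T(\xi',q')\rangle$ is symmetric and positive semidefinite on $L^2(\Gamma)\times\mathbb{R}$, so its induced seminorm automatically satisfies non-negativity, absolute homogeneity and the triangle inequality (the latter two being inherited directly from the genuine norm on the target via the linearity of $T$). Hence the only axiom that is not immediate is definiteness, i.e. that the expression vanishes only at $(\xi,q)=(0,0)$; because the target norm is definite, this is equivalent to the injectivity of $T$.

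Injectivity is the single genuine step, and it is where the presence of the length scale $\ell\neq0$ is used. Supposing $T(\xi,q)=0$, the first component forces $\xi=\overline{\xi}$ (a constant), the second gives $\ell\,\overline{\xi}=0$ and hence $\overline{\xi}=0$ (so $\xi\equiv0$), and the third then reads $\ell^2(\widebar{H}\cdot0+q)=0$, whence $q=0$; thus $\ker T=\{0\}$ and the candidate is a bona fide norm. The main (and quite mild) obstacle is simply keeping the two scalar directions straight: the correct coordinates that diagonalize the expression are the mean $\overline{\xi}$ and the combination $\widebar{H}\,\overline{\xi}+q$, and once $\overline{\xi}$ is itself controlled (by the $\ell^2\overline{\xi}^2$ term) the curvature term pins down $q$. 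I would close by remarking that this argument is insensitive to the value of $\widebar{H}$ — in particular it never uses whether or not $\Gamma$ is a sphere — so the stated expression is a norm on every smooth closed surface.
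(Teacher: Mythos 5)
Your proof is correct, and the paper itself offers no proof of this lemma at all (it is stated with a terminal $\Box$ as self-evident), so there is nothing to contrast your argument with. Your reduction to the injectivity of the linear map $T(\xi,q)=(\xi-\overline{\xi},\,\ell\,\overline{\xi},\,\ell^2(\widebar{H}\,\overline{\xi}+q))$ into $L^2_0(\Gamma)\times\mathbb{R}^2$ is a clean and complete justification, and it correctly isolates the only non-automatic axiom (definiteness) and the fact that the argument is independent of the value of $\widebar{H}$.
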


As a consequence of (\ref{eqgradient}) one also has:

\begin{lem} 
\begin{equation}
\|{\bf v}\|_{0} + \ell\,|{\bf v}|_1 \leq
 \|{\bf v}_\tau\|_0 + \ell\,|{\bf v}_\tau|_1 + 
(1+\ell \|{\bf H}\|_{L^\infty(\Gamma)}) \|v_n\|_0 + \ell |v_n|_1
\label{eqlemma2}
\end{equation}
for all ${\bf v}\,\in\,{\bf H}^1(\Gamma)$.
\end{lem}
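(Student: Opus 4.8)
The plan is to reduce everything to the triangle inequality applied term by term to the orthogonal decomposition (\ref{eqdecomposition}) and its gradient (\ref{eqgradient}); no deep machinery is needed, and the hint ``As a consequence of (\ref{eqgradient})'' is exactly the route I would take.

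First I would bound the $L^2$ part. Since ${\bf v}_\tau$ and $v_n\,\widecheck{\bf n}$ are pointwise orthogonal and $|\widecheck{\bf n}|=1$ a.e., the triangle inequality gives
\begin{equation}
\|{\bf v}\|_0 \leq \|{\bf v}_\tau\|_0 + \|v_n\,\widecheck{\bf n}\|_0 = \|{\bf v}_\tau\|_0 + \|v_n\|_0~.
\end{equation}
Next I would treat the seminorm part by applying the triangle inequality to the three summands of (\ref{eqgradient}):
\begin{equation}
|{\bf v}|_1 = \|\nabla_\Gamma {\bf v}\|_0 \leq \|\nabla_\Gamma {\bf v}_\tau\|_0 + \|\widecheck{\bf n}\otimes \nabla_\Gamma v_n\|_0 + \|v_n\,{\bf H}\|_0~.
\end{equation}
The first term is exactly $|{\bf v}_\tau|_1$. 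For the second, the key observation is that the pointwise (Frobenius) norm of the rank-one tensor $\widecheck{\bf n}\otimes \nabla_\Gamma v_n$ equals $|\widecheck{\bf n}|\,|\nabla_\Gamma v_n| = |\nabla_\Gamma v_n|$, because $\widecheck{\bf n}$ is a unit vector; hence $\|\widecheck{\bf n}\otimes \nabla_\Gamma v_n\|_0 = |v_n|_1$, with \emph{no} extra constant. For the third, pulling out the supremum of the curvature tensor yields $\|v_n\,{\bf H}\|_0 \leq \|{\bf H}\|_{L^\infty(\Gamma)}\,\|v_n\|_0$.

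Finally I would multiply the seminorm bound by $\ell$ and add it to the $L^2$ bound, then group the coefficients of $\|v_n\|_0$: the bare $\|v_n\|_0$ from the first step combines with $\ell\,\|{\bf H}\|_{L^\infty(\Gamma)}\,\|v_n\|_0$ from the curvature term to produce the factor $(1+\ell\,\|{\bf H}\|_{L^\infty(\Gamma)})$, while $|{\bf v}_\tau|_1$ and $|v_n|_1$ each keep their $\ell$, reproducing exactly (\ref{eqlemma2}). I do not expect any genuine obstacle here: the entire content is that the unit length of $\widecheck{\bf n}$ makes the mixed tensor term cost nothing beyond $|v_n|_1$, and that the $L^\infty$ curvature bound is the natural way to absorb the $v_n\,{\bf H}$ contribution into the $\|v_n\|_0$ term.
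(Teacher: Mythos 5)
Your proof is correct and coincides with the argument the paper intends: the lemma is stated there without proof, merely as ``a consequence of (\ref{eqgradient})'', and your computation --- triangle inequality on the decomposition (\ref{eqdecomposition}) and on the three terms of (\ref{eqgradient}), the Frobenius norm of the rank-one tensor collapsing to $|\nabla_\Gamma v_n|$ because $\widecheck{\bf n}$ is unit, and the $L^\infty$ bound on ${\bf H}$ absorbed into the $\|v_n\|_0$ coefficient --- is exactly the omitted verification.
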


Now we proceed to prove Prop. \ref{prop3}.

\begin{proof}
Let $(\xi,q)$ be an arbitrary element of $\Sigma\times \mathbb{R}$. Taking
$\varphi\,\in\,H^1(\Gamma)\cap L^2_0(\Gamma)$ as the unique solution of $\Delta_\Gamma \varphi = \xi-\overline{\xi}$,
we select ${\bf v}\,\in\,{\bf V}$ as
\begin{equation}
{\bf v} = \nabla_\Gamma \varphi + v_n\,\widecheck{\bf n}~,
\qquad\mbox{with}~~
v_n = k_1 (\widebar{H}\,\overline{\xi}+q) 
+ k_2 (H-\widebar{H})\, \overline{\xi}~.
\label{eq22}
\end{equation}
The positive constants $k_1$ and $k_2$ are left unspecified for now.
Rewriting
\begin{equation}
c({\bf v},(\xi,q))=\int_\Gamma \left [
\xi\,\nabla_\Gamma \cdot {\bf v}_\tau + (H\xi+q)\,v_n \right ]
\end{equation}
and using (\ref{eq22}) one has
\begin{eqnarray}
c({\bf v},(\xi,q))& = &\|\xi-\overline{\xi}\|_0^2 
+ k_1\,\left [ |\Gamma|\,(\widebar{H}\,\overline{\xi}+q)^2
+ (\widebar{H}\,\overline{\xi}+q)\,\int_\Gamma (H,\xi-\widebar{H}\,\overline{\xi}) 
\right ] +\nonumber \\
& & + k_2 \int_\Gamma \left [
(H-\widebar{H})^2{\overline{\xi}}^2
+(H-\widebar{H})\,H\,\overline{\xi} (\xi-\overline{\xi}) 
+ (H-\widebar{H})\,(\widebar{H}\,\overline{\xi}+q)\,
\overline{\xi} \right ]~.
\end{eqnarray}
Noticing that the last term in the second integral cancels out and
using that $\int_\Gamma (H\xi-\widebar{H}\,\overline{\xi}) \leq \|H-\widebar{H}\|_0\,\|\xi-\overline{\xi}\|_0$,
one arrives at
\begin{eqnarray}
c({\bf v},(\xi,q)) & \geq & \|\xi-\overline{\xi}\|_0^2 + k_1\,|\Gamma|\,
(\widebar{H}\,\overline{\xi}+q)^2
+ k_2 \|H-\widebar{H}\|_0^2 \,{\overline{\xi}}^2 - \nonumber \\
& & - k_1 \|H-\widebar{H}\|_0\,|\widebar{H}\,\overline{\xi}+q|\,\|\xi-\overline{\xi}\|_0
- k_2 \|(H-\widebar{H})^2\|_0 \,|\overline{\xi}|\,\|\xi-\overline{\xi}\|_0
\end{eqnarray}
which using Young's inequality twice yields
\begin{eqnarray}
c({\bf v},(\xi,q)) & \geq & 
\left ( 1 - k_1 \frac{\|H-\widebar{H}\|_0^2}{2\,|\Gamma|}
- k_2 \frac{\|(H-\widebar{H})^2\|_0^2}{2\,\|H-\widebar{H}\|_0^2} \right ) \,
\|\xi-\overline{\xi}\|_0^2 +\nonumber \\
& & + k_1\,\frac{|\Gamma|}{2}\,(\widebar{H}\,\overline{\xi}+q)^2
+ k_2\,\frac{\|H-\widebar{H}\|_0^2}{2}\,{\overline{\xi}}^2~.
\label{eq38}
\end{eqnarray}

Let us now choose
\begin{eqnarray}
k_1 &=& \min \left \{ \frac{|\Gamma|}{2\|H-\widebar{H}\|_0^2}\,,\,\ell^2\right \}\\
k_2 &=& \min \left \{ \frac{\|(H-\widebar{H})\|_0^2}
{2\,\|(H-\widebar{H})^2\|_0^2}\,,\,\ell^2\right \}
\end{eqnarray}
 gives
\begin{eqnarray}
c({\bf v},(\xi,q)) & \geq & \frac12
\|\xi-\overline{\xi}\|_0^2 + k_1\,\frac{|\Gamma|}{2}\,(\widebar{H}\,\overline{\xi}+q)^2
+ k_2\,\frac{\|H-\widebar{H}\|_0^2}{2}\,{\overline{\xi}}^2 \label{eq41}\\
&\geq & A~~\|(\xi,q)\|_{L^2(\Gamma)\times \mathbb{R}}^2~,
\end{eqnarray}
with
\begin{eqnarray}
A&\doteq &
~\min~\left \{
\frac12\,,\,\frac{|\Gamma|^2}{4\,\ell^4\,\|H-\widebar{H}\|_0^2}\,,\,
\frac{|\Gamma|}{2\,\ell^2}\,,\,
\frac{\|H-\widebar{H}\|_0^4}{4\,\ell^2\,\|(H-\widebar{H})^2\|_0^2}\,,\,
\frac{\|H-\widebar{H}\|_0^2}{2}\right \}
~.
\end{eqnarray}
At the same time, from  (\ref{eqlemma2}) and the estimates
\begin{eqnarray}
\|{\bf v}_\tau\|_0+\ell\,|{\bf v}_\tau|_1 & \leq & c_r\,\ell\,\|\xi-\overline{\xi}\|_0 \\ & & \nonumber \\
\|v_n\|_0 &\leq &\ell^2\,|\Gamma|^{\frac12}\,|\widebar{H}\,\overline{\xi}+q|~+~
\ell^2\,\|H-\widebar{H}\|_0\,|\overline{\xi}|\\
& & \nonumber\\
|v_n|_k &\leq &\ell^2\,|H|_k\,|\overline{\xi}| \qquad \forall\,k\geq 1
\end{eqnarray}
it follows that 
\begin{eqnarray}
\|{\bf v}\|_{\bf X} &\leq &c_r\,\ell\,\|\xi-\bar{\xi}\|_0
+ (1+\ell\,\|{\bf H}\|_{L^\infty(\Gamma)})\,\ell^2\,|\Gamma|^{\frac12}
~\left | \widebar{H}\,\overline{\xi}+q\right |+
\nonumber\\
& &+ \left [
(1+\ell\,\|{\bf H}\|_{L^\infty(\Gamma)})\,\ell^2\,\|H-\widebar{H}\|_0+\ell^3\,|H|_1+\ell^{m+2}
\,|H|_m\right ]~|\overline{\xi}|\nonumber \\ 
& \leq & B~~\|(\xi,q)\|_{L^2(\Gamma)\times \mathbb{R}}
\end{eqnarray}
with
\begin{equation}
B^2\doteq
c_r^2\,\ell^2+
(1+\ell\,\|{\bf H}\|_{L^\infty(\Gamma)})^2\,|\Gamma|
+ \left [
(1+\ell\,\|{\bf H}\|_{L^\infty(\Gamma)})\,\ell\,\|H-\widebar{H}\|_0+\ell^2\,|H|_1+\ell^{m+1}
\,|H|_m\right ]^2
\end{equation}
The claim is proved with $\beta=A/B$.
\end{proof}

Notice that $A$ is equal to zero for a sphere, and thus $\beta=0$.
However, Prop. \ref{prop2} is true irrespective of $\Gamma$ being a
sphere or not. Let us modify the previous proof to prove it.

\begin{proof} (of Prop. \ref{prop2}) Taking the same ${\bf v}$ as
before, and particularizing (\ref{eq41})
to $q=0$ one gets
\begin{equation}
b({\bf v},\xi)\geq
\frac12 \|\xi-\overline{\xi}\|_0^2 
+ \left (
k_1\,\frac{|\Gamma|\,\widebar{H}^2}{2}+ k_2\,\frac{\|H-\widebar{H}\|_0^2}{2}
\right )\,{\overline{\xi}}^2~\geq~C\,\|\xi\|_0^2~.
\end{equation}
with
\begin{equation}
C = \min \left \{
\frac12\,,\,
\frac{k_1\,\widebar{H}^2}{2}+\frac{k_2\,\|H-\widebar{H}\|_0^2}{2\,|\Gamma|}
\right \}~.
\end{equation}
and now $C>0$ even if $H=\widebar{H}$. Proposition \ref{prop2} is thus
proved with $\Sigma=L^2(\Gamma)$ and $\alpha=C/B$.

\end{proof}

\section{Discrete inf-sup condition}

The continuous inf-sup conditions proved above also allow for
the extension to surface finite elements of the discrete
counterpart known as Verf\"urth's lemma \cite{fhs93}. It is
central in the numerical analysis of stabilized finite element
methods such as the Galerkin-Least-Squares method \cite{fh87,ff92}.
A variant of one such method, the pressure gradient projection
method \cite{cb97,bbc00,cbbh01}, has recently been successfully
implemented for lipid membrane models \cite{tb13_cmame,ramb15_jcp}.

Let ${\bf V}_h\subset {\bf V}$ and $\Sigma_h\subset \Sigma$ be
surface finite element spaces, as defined in Dziuk and Elliott
\cite{de13_an}. Notice that these are {\em lifted} spaces, 
which are defined with
the aid of a faceted surface but consist of 
scalar functions (or vector fields) 
defined on the ``exact'' surface $\Gamma$. 

\begin{prop} \label{prop7}If the space $\Sigma_h$ consists of continuous
functions, then there exist $\gamma>0$ and $\delta>0$, independent of the
mesh size $h\doteq \max_K h_K$, such that
\begin{equation}
\sup_{{\bf v}_h\in {\bf V}_h}~
\frac{\int_\Gamma (\xi_h\nabla_\Gamma\cdot{\bf v}_h
+ q\,{\bf v}_h\cdot \widecheck{\bf n} )}
{\|{\bf v}_h\|_{\bf X}}
\geq \gamma\,\|(\xi_h,q)\|_{\Sigma\times \mathbb{R}}
- \delta \left (
\sum_{K\,\in\,\mathcal{T}_h}\, h_K^2\,\|\nabla_\Gamma \xi_h\|_{{\bf L}^2(K)}^2
\right )^{\frac12}
\label{eqprop7}
\end{equation}
for all $(\xi_h,q)\,\in\,\Sigma_h\times \mathbb{R}$.
\end{prop}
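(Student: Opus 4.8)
The plan is to transfer the continuous inf-sup condition of Proposition \ref{prop3} to the discrete level by the standard Verf\"urth argument, exploiting the fact that a discrete $(\xi_h,q)$ is an admissible test pair in the continuous condition. First I would apply \eqref{infsupqp} to the pair $(\xi_h,q)\in\Sigma\times\mathbb{R}$, which yields a continuous velocity ${\bf v}^\star\in{\bf X}$ realizing
\begin{equation}
\int_\Gamma\left(\xi_h\,\nabla_\Gamma\cdot{\bf v}^\star + q\,{\bf v}^\star\cdot\widecheck{\bf n}\right)\;\geq\;\beta\,\|(\xi_h,q)\|_{\Sigma\times\mathbb{R}}\,\|{\bf v}^\star\|_{\bf X}.
\end{equation}
Since ${\bf v}^\star$ is not in ${\bf V}_h$, the next step is to insert a quasi-interpolant ${\bf v}_h=\mathcal{I}_h{\bf v}^\star\in{\bf V}_h$ (a Cl\'ement- or Scott--Zhang-type operator adapted to the lifted surface finite element spaces of Dziuk and Elliott \cite{de13_an}) that is $\|\cdot\|_{\bf X}$-stable and enjoys the local approximation estimate $\|{\bf v}^\star-{\bf v}_h\|_{{\bf L}^2(K)}\leq c\,h_K\,|{\bf v}^\star|_{{\bf H}^1(\widetilde K)}$ on each element patch.

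The heart of the argument is to control the consistency error $\int_\Gamma[\xi_h\,\nabla_\Gamma\cdot({\bf v}^\star-{\bf v}_h)+q\,({\bf v}^\star-{\bf v}_h)\cdot\widecheck{\bf n}]$. For the $q$-term, the factor $q$ is a constant and the net normal flux of ${\bf v}^\star-{\bf v}_h$ can be absorbed into the $\|\cdot\|_{\bf X}$-stability of $\mathcal{I}_h$, so this contributes a term controlled by $\gamma\,\|(\xi_h,q)\|$ after hiding a small multiple of $\|{\bf v}^\star\|_{\bf X}$. The essential term is $\int_\Gamma\xi_h\,\nabla_\Gamma\cdot({\bf v}^\star-{\bf v}_h)$, which I would handle by integrating by parts element by element using \eqref{eqgauss}:
\begin{equation}
\int_K \xi_h\,\nabla_\Gamma\cdot{\bf w}
= \int_K {\bf w}\cdot\left(-\nabla_\Gamma\xi_h + H\,\xi_h\,\widecheck{\bf n}\right) + (\text{inter-element edge terms}),
\end{equation}
with ${\bf w}={\bf v}^\star-{\bf v}_h$. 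Here the continuity of $\xi_h$ across element boundaries is exactly what makes the jump/edge contributions cancel or telescope, which is why the hypothesis ``$\Sigma_h$ consists of continuous functions'' is indispensable. The volume term $\int_K H\,\xi_h\,\widecheck{\bf n}\cdot{\bf w}$ is lower order in $h$ and absorbed into the stability constant, while the principal piece $-\int_K \nabla_\Gamma\xi_h\cdot{\bf w}$ is estimated by Cauchy--Schwarz together with the approximation property of $\mathcal{I}_h$:
\begin{equation}
\Bigl|\int_K \nabla_\Gamma\xi_h\cdot{\bf w}\Bigr|
\leq \|\nabla_\Gamma\xi_h\|_{{\bf L}^2(K)}\,\|{\bf w}\|_{{\bf L}^2(K)}
\leq c\,h_K\,\|\nabla_\Gamma\xi_h\|_{{\bf L}^2(K)}\,|{\bf v}^\star|_{{\bf H}^1(\widetilde K)}.
\end{equation}
Summing over $K$ and applying the discrete Cauchy--Schwarz inequality produces exactly the stabilization seminorm $\bigl(\sum_K h_K^2\,\|\nabla_\Gamma\xi_h\|_{{\bf L}^2(K)}^2\bigr)^{1/2}$ multiplied by $\|{\bf v}^\star\|_{\bf X}$.

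Assembling the pieces, I would divide through by $\|{\bf v}^\star\|_{\bf X}$ (or equivalently by $\|{\bf v}_h\|_{\bf X}$, using stability to compare the two), so that the supremum over ${\bf v}_h\in{\bf V}_h$ dominates
\begin{equation}
\beta\,\|(\xi_h,q)\|_{\Sigma\times\mathbb{R}} - c\,\Bigl(\sum_{K\in\mathcal{T}_h} h_K^2\,\|\nabla_\Gamma\xi_h\|_{{\bf L}^2(K)}^2\Bigr)^{\frac12},
\end{equation}
which is the desired inequality \eqref{eqprop7} with $\gamma$ a fixed fraction of $\beta$ and $\delta$ the resulting constant. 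The mesh-independence of $\gamma$ and $\delta$ follows because both $\beta$ (from the continuous Proposition \ref{prop3}) and the interpolation/stability constants are independent of $h$. The main obstacle I anticipate is technical rather than conceptual: establishing a quasi-interpolation operator on the \emph{lifted} surface finite element spaces that is simultaneously stable in the full $\|\cdot\|_{\bf X}$-norm (including, when $m>1$, the $\ell^m|v_n|_m$ contribution in the normal component) and possesses the local $h_K$-order approximation bound, while keeping careful track of the geometric perturbation between the discrete faceted surface and the exact surface $\Gamma$ on which the lifted functions live.
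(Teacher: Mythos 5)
Your proposal is correct and follows essentially the same route as the paper: apply the continuous inf-sup of Prop.~\ref{prop3} to the pair $(\xi_h,q)$, insert a Cl\'ement-type quasi-interpolant whose stability and local approximation properties are assumed, integrate the consistency term by parts element by element so that the inter-element edge sums cancel by continuity of $\xi_h$, extract the stabilization seminorm from the $\nabla_\Gamma\xi_h$ term, and absorb the remaining $O(h)$ curvature/normal-flux contribution for $h$ small enough (whence the factor $2$ in $\gamma=\beta/(2c_1)$). The only minor imprecision is your claim that the $q$-term is handled by the $\|\cdot\|_{\bf X}$-stability of $\mathcal{I}_h$ alone; its smallness actually requires the $O(h_K)$ approximation estimate for ${\bf v}^\star-\mathcal{I}_h{\bf v}^\star$, exactly as for the curvature term.
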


The proof assumes the existence of a Cl\'ement-type quasi-interpolatory
operator $\mathcal{I}_h:{\bf V}\to {\bf V}_h$ satisfying 
\begin{eqnarray}
\|\mathcal{I}_h{\bf v}\|_{\bf X}&\leq& c_1\,\|{\bf v}\|_{\bf X} \\
\|{\bf v}-\mathcal{I}_h{\bf v}\|_{{\bf L}^2(K)}&\leq
& c_2\,h_K\,\|\nabla_\Gamma {\bf v}\|_{{L}^2(\omega_K)}
\end{eqnarray}
where $\omega_K$ is the union of all elements that share at least one
node with element $K$ \cite{clement75,ErnGuermond04}.

\begin{proof}
For arbitrary $\xi_h$ and $q$, from Prop. \ref{prop3} we have
\begin{eqnarray}
\beta\,\|(\xi_h,q)\|_{\Sigma\times \mathbb{R}}
\leq
\sup_{{\bf v}\in {\bf V}} 
\frac{c(\mathcal{I}_h{\bf v},(\xi_h,q))}{\|{\bf v}\|_{\bf X}}
~+~\sup_{{\bf v}\in {\bf V}} 
\frac{c({\bf v}-\mathcal{I}_h{\bf v},(\xi_h,q))}{\|{\bf v}\|_{\bf X}}~.
\label{eq57}
\end{eqnarray}
It is clear that 
\begin{equation}
\sup_{{\bf v}\in {\bf V}} 
\frac{c(\mathcal{I}_h{\bf v},(\xi_h,q))}{\|{\bf v}\|_{\bf X}}
\leq~
c_1\,\sup_{{\bf v}\in {\bf V}} 
\frac{c(\mathcal{I}_h{\bf v},(\xi_h,q))}{\|\mathcal{I}_h{\bf v}\|_{\bf X}}
\leq~
c_1\,\sup_{{\bf v}_h\in {\bf V}_h} 
\frac{c({\bf v}_h,(\xi_h,q))}{\|{\bf v}_h\|_{\bf X}}~.
\end{equation}
Concerning the second term in the right-hand side of (\ref{eq57}),
and denoting ${\bf w}={\bf v}-\mathcal{I}_h{\bf v}$, we have
\begin{eqnarray}
c({\bf v}-\mathcal{I}_h{\bf v},(\xi_h,q))
& = & \sum_{K\,\in\,\mathcal{T}_h}
\int_K \left [
\xi_h\nabla_\Gamma\cdot {\bf w}+qH\,{\bf w}\cdot \widecheck{\bf n}
\right ] \nonumber \\
& = & \sum_{K\,\in\,\mathcal{T}_h} \int_{\partial K} 
\xi_h\,{\bf w}\cdot \widecheck{\boldsymbol{\nu}}
~+~
\sum_{K\,\in\,\mathcal{T}_h} \int_K
\left ( -\nabla_\Gamma\xi_h\cdot {\bf w}+
\xi_hH {\bf w}\cdot \widecheck{\bf n} \right )~.
\end{eqnarray}
The first sum vanishes if $\xi_h$ is continuous across
element boundaries and $\Gamma$ is $\mathscr{C}^1$.
As a consequence,
since $\|{\bf w}\|_{{\bf L}^2(K)} \leq 
c_2\,h_K\,\|\nabla_\Gamma {\bf v}\|_{{L}^2(\omega_K)}$,
\begin{eqnarray}
c({\bf v}-\mathcal{I}_h{\bf v},(\xi_h,q)) & \leq &
\frac{c_3}{\ell}\,
\left [
\left (
\sum_{K\,\in\,\mathcal{T}_h}
h_K^2\,
\|\nabla_\Gamma \xi_h\|_{{\bf L}^2(K)}^2 \right )^{\frac12}
+ h\,\|H\|_{L^\infty(\Gamma)}\,
\|\xi_h\|_0\right ]~~\|{\bf v}\|_{\bf X}
\end{eqnarray}
where
\begin{eqnarray}
c_3 & = & c_2\,\ell\,\sup_{{\bf v}\in {\bf V}} 
\frac{
\left (
\sum_{K\,\in\,\mathcal{T}_h} \|\nabla_\Gamma {\bf v}\|_{{L}^2(\omega_K)}^2
\right )^{\frac12}}{\|{\bf v}\|_{\bf X}}~.
\end{eqnarray}
Replacing into (\ref{eq57}) one gets
\begin{eqnarray}
\beta\|(\xi_h,q)\|_{\Sigma\times \mathbb{R}}
&\leq& 
c_1~\sup_{{\bf v}_h\in {\bf V}_h} 
\frac{c({\bf v}_h,(\xi_h,q))}{\|{\bf v}_h\|_{\bf X}}~
+ \frac{c_3}{\ell}\,\left (
\sum_{K\,\in\,\mathcal{T}_h}
h_K^2\,
\|\nabla_\Gamma \xi_h\|_{{\bf L}^2(K)}^2 \right )^{\frac12}
+ \frac{c_3\,h\,\|H\|_{L^\infty(\Gamma)}}{\ell}\,
\|\xi_h\|_0 \nonumber \\
& &
\end{eqnarray}
which proves (\ref{eqprop7}) taking $\gamma=\beta/(2\,c_1)$,
$\delta=c_3/(\ell c_1)$ and $h$ small enough.
\end{proof}

\section{Concluding remarks}

It has been shown that the inextensibility of a surface continuum, 
analogous to the incompressibility of a volumetric medium,
is a well-posed constraint for closed surfaces evolving 
in $\mathbb{R}^3$. It gives rise to a surface tension field $\sigma$
that is uniquely defined in $L^2(\Gamma)$.

The simultaneous imposition of both the inextensibility constraint and
the isochoricity constraint (preservation of the enclosed
volume) is also well posed, the only exception being that of
a spherical configuration of the surface continuum. In all other
cases, both the surface tension field $\sigma\,\in\, L^2(\Gamma)$
and the internal pressure $p\,\in\,\mathbb{R}$ are uniquely
determined by the constrained problem.

The estimates in the proofs require the surface $\Gamma$ to
be of class $\mathscr{C}^2$ and have its mean curvature $H$
in $H^m(\Gamma)$, $m\geq 1$.

On the basis of the exact well-posedness, a discrete
stability result was established for discretizations of the
surface tension field that consist of continuous interpolants.
It consists of a modified inf-sup condition (sometimes called
Verf\"urth's lemma) which plays a central role in the justification
of stabilized methods for incompressible flow. The extension 
of this inf-sup condition to deformable surfaces thus justifies
the stabilized treatment of the surface tension proposed in
recent work on lipid membranes \cite{tb13_cmame,ramb15_jcp}.

\section*{ACKNOWLEDGMENTS}

The authors gratefully acknowledge the financial support received 
from S\~ao Paulo Research Foundation (FAPESP) 
and from the Brazilian National Research and Technology Council (CNPq).
Thanks are also due to D. Rodrigues for a careful revision of the manuscript.


\begin{thebibliography}{10}
\expandafter\ifx\csname url\endcsname\relax
  \def\url#1{\texttt{#1}}\fi
\expandafter\ifx\csname urlprefix\endcsname\relax\def\urlprefix{URL }\fi
\expandafter\ifx\csname href\endcsname\relax
  \def\href#1#2{#2} \def\path#1{#1}\fi

\bibitem{gurtin81}
M.~Gurtin.
\newblock {\em An Introduction to Continuum Mechanics}.
\newblock Academic Press Inc., London, 1981.

\bibitem{girault_raviart}
V.~Girault and {P.-A.} Raviart.
\newblock {\em Finite Element Method for Navier-Stokes Equations: Theory and
  Algorithms}.
\newblock Springer Verlag, Berlin, 1986.

\bibitem{brezzifortin}
F.~Brezzi and M.~Fortin.
\newblock {\em Mixed and Hybrid Finite Element Methods}.
\newblock Springer-Verlag, 1991.

\bibitem{arroyo2}
M.~Arroyo and A.~{DeSimone}.
\newblock Relaxation dynamics of fluid membranes.
\newblock {\em Phys. Rev. E}, 79:031925, 2009.

\bibitem{tb13_cmame}
I.~Tasso and G.~Buscaglia.
\newblock A finite element method for viscous membranes.
\newblock {\em Comput. Methods Appl. Mech. Engrg.}, 255:226--237, 2013.

\bibitem{ramb15_jcp}
D.~Rodrigues, R.~Ausas, F.~Mut, and G.~Buscaglia.
\newblock A semi-implicit finite element method for viscous lipid membranes.
\newblock {\em J. Comput. Phys.}, 298:565--584, 2015.

\bibitem{bgn15_ta}
J.~Barrett, H.~Garcke, and R.~N\"urnberg.
\newblock A stable numerical method for the dynamics of fluidic membranes.
\newblock {\em Univ. Regensburg Preprint}, 18, 2014.

\bibitem{seifert97}
U.~Seifert.
\newblock Configurations of fluid membranes and vesicles.
\newblock {\em Adv. Phys.}, 46:13--137, 1997.

\bibitem{delfourzolesio11}
M.~Delfour and {J.-P.} Zol\'esio.
\newblock {\em Shapes and Geometries. Metrics, Analysis, Differential Calculus,
  and Optimization}.
\newblock SIAM, 2nd edition, 2011.

\bibitem{ba11_cmame}
G.~Buscaglia and R.~Ausas.
\newblock Variational formulations for surface tension, capillarity and
  wetting.
\newblock {\em Comput. Methods Appl. Mech. Engrg.}, 200(45-46):3011--3025,
  2011.

\bibitem{ErnGuermond04}
A.~Ern and J.-L. Guermond.
\newblock {\em Theory and practice of finite elements}.
\newblock Springer, 2004.

\bibitem{canham}
P.~Canham.
\newblock The minimum energy of bending as a possible explanation of the
  biconcave shape of the human red blood cell.
\newblock {\em J. Theor. Biol.}, 26:61--81, 1970.

\bibitem{helfrich}
W.~Helfrich.
\newblock Elastic properties of lipid bilayers -- theory and possible
  experiments.
\newblock {\em Z. Naturforsch. C}, 28:693--703, 1973.

\bibitem{deuling_helfrich}
H.~Deuling and W.~Helfrich.
\newblock Red blood cell shapes as explained on the basis of curvature
  elasticity.
\newblock {\em Biophys. J.}, 16(8):861--868, 1976.
\newblock PMCID: PMC1334911.

\bibitem{sf14_jmb}
B.~Seguin and E.~Fried.
\newblock Microphysical derivation of the {C}anham-{H}elfrich free-energy
  density.
\newblock {\em J. Math. Biol.}, 68(3):647--665, 2014.

\bibitem{de13_an}
G.~Dziuk and C.~Elliott.
\newblock Finite element methods for surface {PDEs}.
\newblock {\em Acta Numerica}, 22:289--396, 2013.

\bibitem{fhs93}
L.~Franca, T.~Hughes, and R.~Stenberg.
\newblock Stabilized finite element methods.
\newblock In M.~Gunzburger and R.~Nicolaides, editors, {\em Incompressible
  Computational Fluid Dynamics}. Cambridge Univ. Press, 1993.

\bibitem{fh87}
L.~Franca and T.~Hughes.
\newblock Two classes of mixed finite element methods.
\newblock {\em Comp. Meth. Applied Mech. and Engng.}, 69:89--129, 1987.

\bibitem{ff92}
L.~Franca and S.~Frey.
\newblock Stabilizaed finite element methods: {II}. {T}he incompressible
  navier-stokes equations.
\newblock {\em Comp. Meth. Appl. Mech. Engrg.}, 99:209--233, 1992.

\bibitem{cb97}
R.~Codina and J.~Blasco.
\newblock {A finite element formulation for the Stokes problem allowing equal
  velocity-pressure interpolation}.
\newblock {\em Comput. Meth. Appl. Mech. Engrg.}, 143:373--391, 1997.

\bibitem{bbc00}
G.~Buscaglia, F.~Basombr{\'\i}o, and R.~Codina.
\newblock Fourier analysis of an equal-order incompressible flow solver
  stabilized by pressure-gradient projection.
\newblock {\em Int. J. Numer. Methods Fluids}, 34:65--92, 2000.

\bibitem{cbbh01}
R.~Codina, J.~Blasco, G.~C. Buscaglia, and A.~Huerta.
\newblock Implementation of a stabilized finite element formulation for the
  incompressible navier-stokes equations based on a pressure gradient
  projection.
\newblock {\em Int. J. Num. Meth. Fluids}, 37(4):419--444, 2001.

\bibitem{clement75}
P.~Cl\'ement.
\newblock Approximation by finite element functions using local regularization.
\newblock {\em {RAIRO} Anal. Num\'er.}, 9:77--84, 1975.

\end{thebibliography}
\end{document}